\documentclass[12pt]{amsart}

\usepackage{amsthm,amssymb,color,hyperref,mathtools,multirow,tikz,xinttools}
\usepackage[margin=1in]{geometry}

\theoremstyle{definition}
\newtheorem{thm}{Theorem}[section]
\newtheorem{prop}[thm]{Proposition}

\newtheorem{example}[thm]{Example}
\newtheorem{conj}[thm]{Conjecture}
\newtheorem{question}[thm]{Question}

\DeclareMathOperator{\Av}{Av}
\DeclareMathOperator{\Des}{Des}
\DeclareMathOperator{\Desbot}{Desbot}
\DeclareMathOperator{\Destop}{Destop}
\DeclareMathOperator{\des}{des}
\DeclareMathOperator{\inv}{inv}
\DeclareMathOperator{\maj}{maj}

\DeclareMathOperator{\exc}{exc}
\DeclareMathOperator{\st}{st}
\DeclareMathOperator{\std}{std}


\newcommand{\eps}{\varepsilon}
\newcommand{\NN}{\mathbb{N}}
\newcommand{\RR}{\mathbb{R}}
\newcommand{\ZZ}{\mathbb{Z}}
\newcommand{\symm}{\mathfrak{S}}
\newcommand{\desWilf}{\overset{\des}{\equiv}}
\newcommand{\stWilf}{\overset{\st}{\equiv}}



\usetikzlibrary{patterns}
\providecommand{\textscale}{0.25}
\newcommand{\drawthegrid}[1]{%
\draw (0.01,0.01) grid (#1+0.99,#1+0.99);
}
\newcommand{\drawthepoints}[2]{%
\foreach[count=\x] \y in {#1}
\filldraw (\x,\y) circle (#2 pt);
}
\newcommand{\shadeboxes}[1]{
    \foreach \x/\y in {#1}
    \fill[pattern=north east lines, pattern color=black!75] (\x,\y) rectangle +(1,1);
}
\newcommand{\clpattern}[3][5]{%
    \pgfmathsetmacro\circlesize{#1+4}
    \drawthegrid{\xintNthElt{0}{\xintCSVtoList {#3}}}
    \drawthepoints{#3}{#1}
    \foreach \x in {#2}
    \draw (\x,\xintNthElt{\x}{\xintCSVtoList {#3}}) circle (\circlesize pt);
}
\newcommand{\textpattern}[3][]{%
    \raisebox{0.6ex}{
    \begin{tikzpicture}[scale=\textscale, baseline=(current bounding box.center)]
        \shadeboxes{#3}
        \clpattern[8]{#1}{#2}
    \end{tikzpicture}}\;
}




\begin{document}

\title[Descents and $\des$-Wilf Equivalence]{Descents and $\des$-Wilf Equivalence of Permutations Avoiding Certain Non-classical Patterns}

\author{Caden Bielawa}
\address[Caden Bielawa, Robert Davis, Daniel Greeson]{
	Department of Mathematics \\
	Michigan State University \\
	East Lansing, MI 48824}
\email[Caden Bielawa]{bielawac@msu.edu}

\author{Robert Davis}
\email[Robert Davis]{davisr@math.msu.edu}

\author{Daniel Greeson}
\email[Daniel Greeson]{greesond@msu.edu}

\author{Qinhan Zhou}\thanks{The fourth author was supported in part by the Michigan State University Discovering America exchange program.}
\address[Qinhan Zhou]{
	School of Mathematics and Statistics \\
	Xi'an Jiaotong University \\
	Xi'an, Shaanxi 710049, China}
\email{qinhan\_zhou@qq.com}

\begin{abstract}
	A frequent topic in the study of pattern avoidance is identifying when two sets of patterns $\Pi, \Pi'$ are Wilf equivalent, that is, when $|\text{Av}_n(\Pi)| = |\text{Av}_n(\Pi')|$ for all $n$.
	In recent work of Dokos et al. the notion of Wilf equivalence was refined to reflect when avoidance of classical patterns preserves certain statistics.
	In this article, we continue their work by examining $\text{des}$-Wilf equivalence when avoiding certain non-classical patterns.
\end{abstract}

\maketitle


\section{Introduction}

Let $\symm_n$ denote the set of permutations of $[n] := \{1,\dots,n\}$, and let $\symm = \symm_1 \cup \symm_2 \cup \dots$ be the set of all permutations of finite length.
We write $\sigma \in \symm_n$ as $\sigma = a_1a_2\dots a_n$ to indicate that $\sigma(i) = a_i$.
A function $\st: \symm_n \to \NN$ is called a \emph{statistic}, and the systematic study of permutation statistics is generally accepted to have begun with MacMahon \cite[Volume I, Section III, Chapter V]{MacMahonCA}. 
Four of the most well-known statistics are the \emph{descent}, \emph{inversion}, \emph{major}, and \emph{excedance} statistics, defined respectively by
\begin{eqnarray*}
	\des(\sigma) &=& | \Des(\sigma)| \\
	\inv(\sigma) &=& | \{(i,j) \in [n]^2\ |\ i < j \text{ and } a_i > a_j \}| \\
	\maj(\sigma) &=& \sum_{i \in \Des(\sigma)} i \\
	\exc(\sigma) &=& |\{ i \in [n]\ |\ a_i > i\}|,
\end{eqnarray*}
where $\Des(\sigma) = \{i \in [n-1]\ |\ a_i > a_{i+1}\}$.
Given any statistic $\st$, one may form the generating function
\[
	F_n^{\st}(q) = \sum_{\sigma \in \symm_n} q^{\st \sigma}.
\]
A famous result due to MacMahon \cite{MacMahonCA} states that $F_n^{\des}(q) = F_n^{\exc}(q)$, and that both are equal to the \emph{Eulerian polynomial} $A_n(q)$.
Similarly, it is known that $F_n^{\inv}(q) = F_n^{\maj}(q) = [n]_q!$, where
\[
	[n]_q = 1 + q + \dots + q^{n-1} \text{ and } [n]_q! = [n]_q[n-1]_q\dots [1]_q.
\]

Let $A \subseteq [n]$, and denote by $\symm_A$ the set of permutations of the elements of $A$.
The \emph{standardization} of $\sigma = a_1\dots a_{|A|} \in \symm_A$ is the element of $\symm_{|A|}$ whose letters are in the same relative order as those of $\sigma$; we denote this permutation by $\std(\sigma)$.
Now, we say that a permutation $\sigma \in \symm_n$ \emph{contains the pattern} $\pi \in \symm_k$ if there exists a subsequence $\sigma' = a_{i_1}\dots a_{i_k}$ of $\sigma$ such that $\std(\sigma') = \pi$.
If no such subsequence exists, then we say that $\sigma$ \emph{avoids the pattern} $\pi$.
Since we will introduce additional notions of patterns, we may call such a pattern a \emph{classical pattern} to avoid confusion.
If $\Pi \subseteq \symm$, then we say $\sigma$ \emph{avoids} $\Pi$ if $\sigma$ avoids every element of $\Pi$.
The set of all permutations of $\symm_n$ avoiding $\Pi$ is denoted $\Av_n(\Pi)$.
In a mild abuse of notation, if $\Pi = \{\pi\}$, we will write $\Av_n(\pi)$.
If $\Pi,\Pi'$ are two sets of patterns and $|\Av_n(\Pi)| = |\Av_n(\Pi')|$ for all $n$, then we say $\Pi$ and $\Pi'$ are \emph{Wilf equivalent} and write $\Pi \equiv \Pi'$.

These ideas may be combined by setting
\[
	F_n^{\st}(\Pi;q) = \sum_{\sigma \in \Av_n(\Pi)} q^{\st \sigma}.
\]
This allows one to say that $\Pi,\Pi'$ are \emph{$\st$-Wilf equivalent} if $F_n^{\st}(\Pi;q) = F_n^{\st}(\Pi';q)$ for all $n$, and write this as $\Pi \stWilf \Pi'$.
Thus, $\Pi$ and $\Pi'$ may be Wilf equivalent without being $\st$-Wilf equivalent.
As a concrete example, $123$ and $321$ are clearly not $\des$-Wilf equivalent, even though they are Wilf equivalent.
It is straightforward to check that $\st$-Wilf equivalence is indeed an equivalence relation on $\symm$.

Since it is generally a difficult question to determine whether two sets are nontrivially Wilf equivalent, one should not expect it to be any easier to determine $\st$-Wilf equivalence.
However, it is certainly possible to obtain some results; see \cite{DokosEtAl} for results regarding $F_n^{\inv}$ and $F_n^{\maj}$, and \cite{Baxter,CameronKillpatrick} for further results, including a study of enumeration strategies for questions of this nature.
In this article, we will study $F_n^{\des}(\Pi;q)$ for certain non-classical patterns, called mesh patterns and barred patterns.
Special cases will allow us to identify $\des$-Wilf equivalences.
We will also present several conjectural $\des$-Wilf equivalences and provide computational evidence for these. 

\subsection{Acknowledgements}

The authors would like to thank the anonymous referee for his or her thoughtful comments which significantly improved this work.


\section{Pattern Avoidance Background}

\subsection{Classical Patterns}

In order to work most efficiently, it is important to recognize that certain Wilf equivalences are almost immediate to establish.
For example, it is obvious that $|\Av_n(123)| = |\Av_n(321)|$, since $a_1\dots a_n \in \Av_n(123)$ if and only if $a_na_{n-1}\dots a_1 \in \Av_n(321)$.
This idea can be generalized significantly.

\begin{figure}
	\begin{tikzpicture}
		\draw[step=1cm,very thin] (1,1) grid (6,6);
			\foreach \i in {(1,3),(2,4),(3,2),(4,5),(5,1),(6,6)}
			{
				\fill \i circle [radius = 3pt];
			}
	\end{tikzpicture}
	\caption{The plot of $342516$.}\label{fig: plot}
\end{figure}
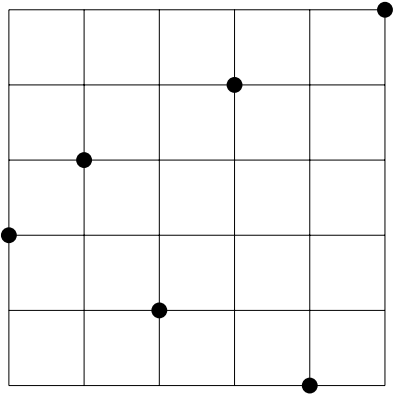

The $\emph{plot}$ of $\sigma \in \symm_n$ is the set of pairs $(i,\sigma(i)) \in \RR^2$ and will be denoted $P(\sigma)$. 
The plot of $342516$ is shown in Figure~\ref{fig: plot}.
Let $D_4 = \{R_0,R_{90},R_{180},R_{270},r_{-1},r_{0},r_{1},r_{\infty}\}$, where $R_{\theta}$ is counterclockwise rotation of a plot by an angle of $\theta$ degrees and $r_m$ is reflection across a line of slope $m$. 
A couple of these rigid motions have easy descriptions in terms of the one-line notation for permutations.
If $\pi = a_1a_2 \ldots a_k$ then its {\em reversal} is $\pi^r =a_k\ldots a_2a_1 =r_{\infty}(\pi)$, and its {\em complement} is $\pi^c = (k+1-a_1)\ (k+1-a_2)\ \dots \ (k+1-a_k)=r_0(\pi)$.

Note that $\sigma \in \Av_n(\pi)$ if and only if $f(\sigma) \in \Av_n(f(\pi))$ for any $f \in D_4$, hence $\pi \equiv f(\pi)$.
For this reason, the equivalences induced by the dihedral action on a square are often referred to as the {\em trivial Wilf equivalences}. 

Using these techniques, it is easy to show that $123$ and $321$ are trivially Wilf equivalent, as are all of $132$, $213$, $231$, and $312$.
It is less obvious, however, whether $123$ and $132$ are Wilf equivalent.
This question was settled by independent results due to MacMahon \cite{MacMahonCA} and Knuth \cite{KnuthVol1}, whose combined work showed that $\Av_n(132)$ and $\Av_n(123)$ are enumerated by the \emph{$n^{th}$ Catalan number}
\[
	C_n = \frac{1}{n+1}\binom{2n}{n}.
\]
The Catalan numbers are famous for appearing in a multitude of combinatorial situations; see \cite{StanleyCatalan} for many of them.

One of the most well-known combinatorial objects enumerated by the Catalan numbers are Dyck paths.
A \emph{Dyck path of length $2n$} is a lattice path in $\RR^2$ starting at $(0,0)$ and ending at $(2n,0)$, using steps $(1,1)$ and $(1,-1)$, which never goes below the $x$-axis.
See Figure~\ref{fig: dyck} for an example Dyck path of length $8$.

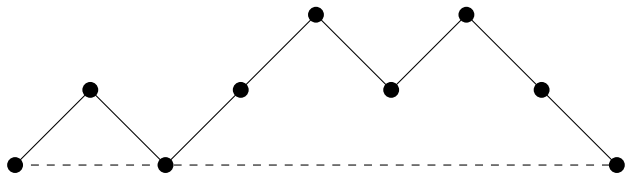
\begin{figure}
	\begin{tikzpicture}[every node/.style={inner sep=0}]
		\draw[dashed] (0,0) -- (8,0);

		\node (A) at (0,0) {};
		\node (B) at (1,1) {};
		\node (C) at (2,0) {};
		\node (D) at (3,1) {};
		\node (E) at (4,2) {};
		\node (F) at (5,1) {};
		\node (G) at (6,2) {};
		\node (H) at (7,1) {};
		\node (I) at (8,0) {};
		
		\draw (A) -- (B) -- (C) -- (D) -- (E) -- (F) -- (G) -- (H) -- (I);
		
		\foreach \xy in {(0,0),(1,1),(2,0),(3,1),(4,2),(5,1),(6,2),(7,1),(8,0)}
		{
			\fill \xy circle [radius=3pt];
		}
	\end{tikzpicture}
	\caption{A Dyck path of length $8$.}\label{fig: dyck}
\end{figure}


\subsection{Non-classical Patterns}

In this section, we will define two classes of non-classical patterns and describe what it means for a permutation to contain or avoid them.
The definitions of Wilf equivalence and $\des$-Wilf equivalence then extend to these patterns in the same way as classical patterns, so their precise definitions will be omitted.

A \emph{mesh pattern} is a pair $(\pi,M)$ where $\pi \in \symm_k$ and $M \subseteq [0,k]^2$.
Mesh patterns are a vast generalization of classical patterns and were first introduced by Br\"and\'en and Claesson \cite{BrandenClaessonMesh}.
It is convenient to represent a mesh pattern as a grid which plots $\pi$ and shades in the unit squares whose bottom-left corners are the elements of $M$.
For example, one may represent the mesh pattern $(\pi_0,M_0) = (4213,\{(0,2),(1,0),(1,1),(3,3),(3,4),(4,3)\})$ as follows:
\[
	(\pi_0,M_0) = \textpattern{4,2,1,3}{0/2,1/0,1/1,3/3,3/4,4/3}
\]

Containment of mesh patterns is most easily understood by an informal statement but illustrative examples; the formal definition, given in \cite{BrandenClaessonMesh}, shows that the intuition developed this way behaves as expected. 
We say that $\sigma \in \symm_n$ \emph{contains the mesh pattern} $(\pi,M)$ if $\sigma$ contains an occurrence of $\pi$ and the shaded regions of $P(\pi)$ corresponding to this occurrence contain no other elements of $P(\sigma)$.
If $\sigma$ does not contain $(\pi,M)$, then we say $\sigma$ \emph{avoids} $(\pi,M)$.

For the illustrative examples, first consider $\sigma = 612435$.
Notice that while $6435$ is an occurrence of $4213$ in $\sigma$, it is not an occurrence of the mesh pattern $(\pi_0,M_0)$ given above, since the shaded regions in $P(\sigma)$ dictated by $M_0$ yield
\[
	\textpattern{6,1,2,4,3,5}{0/4,1/0,1/1,1/2,1/3,2/0,2/1,2/2,2/3,3/0,3/1,3/2,3/3,5/5,5/6,6/5}
\]
Now consider $\sigma' = 153624$.
In this case, $5324$ is an occurrence of both $4213$ and $(\pi_0,M_0)$ in $\sigma'$, since the shading in this case is
\[
	\textpattern{1,5,3,6,2,4}{0/3,1/3,2/0,2/1,2/2,5/4,5/5,5/6,6/4}
\]

In certain cases, determining which permutations avoid a mesh pattern $(\pi,M)$ with $M$ nonempty is equivalent to determining which permutations avoid $\pi$ as a classical pattern.
When this happens, we say that $(\pi,M)$ has \emph{superfluous mesh}, and Tenner \cite{TennerSuperfluous} identified when exactly a mesh pattern has superfluous mesh.
To do this, we first define an \emph{enclosed diagonal} of $(\pi,M)$ to be a triple $((i,j),\varepsilon,\ell)$ where $\varepsilon \in \{-1,1\}$, $\ell \geq 1$, and the following three properties hold:
\begin{enumerate}
	\item The plot of $\pi$ contains the set $\{(i+d,j+\varepsilon d) \mid 1 \leq d < \ell\}$;
	\item The plot of $\pi$ contains neither $(i,j)$ nor $(i+\ell,j+\varepsilon \ell)$;
	\item $\{(i+d,j+\varepsilon d) \mid 0 \leq d < \ell\} \subseteq M$.
\end{enumerate}

Note that an enclosed diagonal may consist of a single element, as long as the corresponding box in the mesh pattern contains no element of $P(\pi)$.
To illustrate, the following three mesh patterns all have a unique enclosed diagonal:
\[
	\textpattern{1,3,2}{2/0} \quad \textpattern{1,3,2}{0/0,1/1} \quad \textpattern{1,3,2}{1/3,2/2,3/1}
\]
However, none of the following five mesh patterns have any enclosed diagonals:
\[
	\textpattern{1,3,2}{2/2} \quad \textpattern{1,3,2}{0/1} \quad \textpattern{1,3,2}{1/1,1/2} \quad \textpattern{1,3,2}{1/1,1/2,2/1,2/2} \quad \textpattern{1,3,2}{0/1,1/1,2/1,3/1}
\]
The following theorem gives the characterization of when a pattern has superfluous mesh.
As a result, we will not focus on any patterns with superfluous mesh, but we will still use the theorem briefly.

\begin{thm}[{\cite[Theorem~$3.5^{\prime}$]{TennerSuperfluous}}]\label{thm:superfluous}
	A mesh pattern has superfluous mesh if and only if it has no enclosed diagonals.
\end{thm}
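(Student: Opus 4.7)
The plan is to prove the biconditional by handling the two directions separately, with enclosed diagonals appearing as the witness in one direction and as the obstruction in the other.

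For the direction that no enclosed diagonal implies superfluous mesh, I would show that any $\sigma$ containing $\pi$ classically also contains $(\pi, M)$. Starting from any classical occurrence $\tau$ of $\pi$ in $\sigma$, if every shaded region of $M$ (interpreted relative to $\tau$) is free of other letters of $\sigma$, then $\tau$ is already a mesh occurrence. Otherwise, some shaded region contains an extra letter $p$, and I would ``shift'' $\tau$ by swapping one of its letters with $p$ --- specifically, the letter of $\tau$ in the row or column bounding that shaded cell, chosen so that $p$ has the right relative order to take its place --- producing a new classical occurrence $\tau'$. Careful bookkeeping (for example, showing each shift strictly decreases the lexicographic order of the occurrence's positions) ensures the procedure terminates at an occurrence whose shaded regions are all empty, i.e., a mesh occurrence.

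For the reverse direction, given an enclosed diagonal $((i,j),\varepsilon,\ell)$, I would exhibit a $\sigma$ containing $\pi$ classically but avoiding $(\pi, M)$. The natural construction inserts one or more new letters into the interiors of shaded cells of the diagonal --- with positions strictly between the appropriate $\pi$-columns and values strictly between the appropriate $\pi$-rows --- and standardizes. The original $k$ letters of $\pi$ remain a classical occurrence in $\sigma$, but now the inserted letter sits in a shaded cell and blocks that occurrence from being a mesh occurrence. The delicate point is to show that \emph{every} classical occurrence of $\pi$ in $\sigma$, including alternative occurrences using the inserted letter in place of a $\pi$-letter, is similarly blocked; conditions (1)--(3) of an enclosed diagonal are invoked here to confine the inserted letter to the diagonal strip and rule out unblocked alternatives.

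The main obstacle in both directions is really the same phenomenon in disguise: one must exclude a kind of ``diagonal pathology.'' In the first direction, if the shifting procedure were to cycle, the letters shifted and the cells visited would trace an enclosed diagonal --- the interior diagonal corners being precisely the shifted $\pi$-letters, the endpoint corners plot-free (else the shift would have already stopped), and every cell in $M$ --- contradicting the hypothesis. In the second direction, alternative occurrences that might evade the shaded-cell block would similarly trace out such a diagonal, and the enclosed-diagonal hypothesis tells us exactly where to place the extra letters to prevent them. Making this correspondence between shift/evasion patterns and enclosed diagonals precise is the technical heart of the argument.
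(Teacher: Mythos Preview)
The paper does not prove this theorem at all: it is quoted verbatim as \cite[Theorem~$3.5'$]{TennerSuperfluous} and used as a black box, with no accompanying argument. There is therefore nothing in the paper to compare your proposal against.

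That said, your outline is essentially the strategy of Tenner's original proof. One point to tighten: your termination argument for the shifting procedure is internally inconsistent. You first suggest that each shift strictly decreases the lexicographic order of the occurrence's positions, which would guarantee termination outright; but then in the final paragraph you argue by contradiction that a \emph{cycle} would trace an enclosed diagonal. These are two different arguments, and the first one is not obviously true --- swapping a letter of $\tau$ with an intruding point $p$ can move a position either up or down depending on where $p$ sits relative to the letter it replaces. Tenner's actual argument is closer to your second idea: one shows that a shift which fails to make progress (or which would have to be undone by a later shift) forces the shaded cells and plot points involved to line up exactly as in conditions (1)--(3) of an enclosed diagonal. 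If you pursue this route, you should drop the lexicographic claim and make the ``failure $\Rightarrow$ enclosed diagonal'' correspondence the centerpiece.
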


Mesh patterns also generalize \emph{$1$-barred patterns}, in which a classical pattern is allowed (but not required) to have a bar above one letter.
This is a special case of \emph{barred patterns}, in which each letter is allowed to have a bar above it.
The bars above letters indicate that certain additional rules are required in order to define containment of the pattern.
We will not give the precise definition of containment and avoidance of barred patterns in general, but will observe that if there are two or more bars in the pattern, there is not necessarily a simple translation of the barred pattern into a mesh pattern. 
In some instances, a barred pattern may be described as a \emph{decorated mesh pattern} \cite{UlfarssonDecoratedMesh}, but this is not always possible.
To avoid this difficulty in the statement and proof of Proposition~\ref{prop:barred prop}, we will simply describe here what it means for a permutation to avoid two specific barred patterns.

We say that $\sigma = a_1\dots a_n$ \emph{avoids} $\bar1\bar243$ if, whenever $a_ia_j$ is an occurrence of $21$, then there are some integers $k,l$ such that $k < l < i$ and $a_ka_la_ia_j$ is an occurrence of $1243$. 
We also say that $\sigma$ \emph{avoids} $\bar132\bar4$ if, whenever $a_ia_j$ is an occurrence of $21$, then there are some integers $k,l$ such that $k < i < j < l$ and $a_ka_ia_ja_l$ is an occurrence of $1324$.
As an example, $\sigma = 124635$ avoids $\bar1\bar243$ since all occurrences of $21$, which are $43$, $63$, and $65$, extend to an occurrence of $1243$ by placing $12$ before them.
However, $\sigma$ contains $\bar132\bar4$ since $63$, which is an occurrence of $21$, does not play the role of $32$ in any occurrence of $1324$ in $\sigma$. 

\section{Main Results}

We now have all of the tools we need to begin proving results.
We begin with a simple application of several known theorems.

\begin{prop}
	If $(132,M_1)$ and $(312,M_2)$ are mesh patterns, neither of which contain an enclosed diagonal, then
	\[
		(132,M_1) \desWilf (312,M_2).
	\]
\end{prop}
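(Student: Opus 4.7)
The plan is to reduce the proposition to a statement about classical patterns and then exhibit an explicit descent-preserving bijection. Since neither $(132,M_1)$ nor $(312,M_2)$ contains an enclosed diagonal, Theorem~\ref{thm:superfluous} says that both patterns have superfluous mesh, so $\Av_n((132,M_1)) = \Av_n(132)$ and $\Av_n((312,M_2)) = \Av_n(312)$ for every $n$. Hence $F_n^{\des}((132,M_1);q) = F_n^{\des}(132;q)$ and similarly on the other side, and it suffices to prove that $132 \desWilf 312$.

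To establish $132 \desWilf 312$, I would build a bijection $\varphi: \Av_n(132) \to \Av_n(312)$ by recursion on $n$. Given $\sigma \in \Av_n(132)$ with $n$ in position $k$, the $132$-avoidance forces every entry of $\sigma$ to the left of $n$ to exceed every entry to the right. Thus we can write $\sigma = \alpha\, n\, \beta$ where $\alpha$ is a $132$-avoiding permutation of $\{n-k+1,\dots,n-1\}$ and $\beta$ is a $132$-avoiding permutation of $\{1,\dots,n-k\}$. Define $\varphi(\sigma) = \gamma\, 1\, \delta$, where $\gamma$ is the permutation of $\{2,\dots,n-k+1\}$ whose standardization is $\varphi(\std(\beta))$, and $\delta$ is the permutation of $\{n-k+2,\dots,n\}$ whose standardization is $\varphi(\std(\alpha))$. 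In other words, $\varphi$ swaps the two sides of the split and replaces the largest entry by the smallest. The base case $n = 1$ is trivial.

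Three things must then be checked: that $\varphi(\sigma) \in \Av_n(312)$, that $\varphi$ is a bijection, and that $\des(\varphi(\sigma)) = \des(\sigma)$. The first follows by induction together with the observation that any would-be occurrence of $312$ in $\gamma\,1\,\delta$ is blocked either by the inductive hypothesis (if all three entries lie on one side) or by the inequality $\max(\gamma) < \min(\delta)$ (if it would straddle the central $1$). The bijection property is immediate from the symmetry of the construction: the inverse follows the same rule. The descent count is the one place requiring care, but the bookkeeping is clean: for $1 < k < n$ one has $\des(\sigma) = \des(\std(\alpha)) + 1 + \des(\std(\beta))$, where the $+1$ is the forced descent from $n$ to its right neighbor; on the image side, $\des(\varphi(\sigma)) = \des(\gamma) + 1 + \des(\delta)$, where the $+1$ is the forced descent from the left neighbor of $1$ down to $1$. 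These match under the induction, and the boundary cases $k = 1$ and $k = n$ (with no forced descent in either $\sigma$ or $\varphi(\sigma)$) match as well.

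The main potential obstacle is getting the boundary descent accounting right: one has to notice that although the position of the large letter $n$ in $\sigma$ becomes the position of the small letter $1$ in $\varphi(\sigma)$ only after reflecting $k \mapsto n-k+1$, the forced descent contributed by $n$ at position $k$ in $\sigma$ is exactly compensated by the forced descent contributed just before $1$ at position $n-k+1$ in $\varphi(\sigma)$. Alternatively, if a citation to a prior result (for instance, the fact that both $F_n^{\des}(132;q)$ and $F_n^{\des}(312;q)$ equal the Narayana polynomial) is available from the Dokos et al.\ literature, the second half of the argument can be replaced by that citation, and the proof becomes a one-line application of Theorem~\ref{thm:superfluous}.
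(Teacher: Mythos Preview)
Your reduction via Theorem~\ref{thm:superfluous} to the classical statement $132 \desWilf 312$ is exactly what the paper does. Where you diverge is in the second half: the paper simply cites (from Reifegerste) that the number of $312$-avoiders in $\symm_n$ with $k$ descents is the Narayana number $N_{n,k} = \frac{1}{n}\binom{n}{k}\binom{n}{k+1}$, observes that this sequence is symmetric in $k$, and uses the complement $\sigma \mapsto \sigma^c$, which sends $\Av_n(312)$ to $\Av_n(132)$ while replacing $\des(\sigma)$ by $n-1-\des(\sigma)$. Your route instead builds an explicit $\des$-preserving bijection via the recursive decomposition around the extremal letter. Both arguments are valid; the paper's is a two-line citation, while yours is self-contained and actually exhibits the bijection (indeed, you note at the end that the Narayana citation is an available shortcut). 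One small remark: your parenthetical about the boundary cases is not quite right for $k=1$, since when $n>1$ there \emph{is} a forced descent from $n$ in $\sigma$ and a matching forced descent into $1$ in $\varphi(\sigma)$; only the case $k=n$ is genuinely descent-free at the splice. The bookkeeping still balances, however, so this does not affect the correctness of the argument.
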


\begin{proof}
	By Theorem~\ref{thm:superfluous}, $\Av_n((312,M_2)) = \Av_n(312)$, so $(312,M_2) \desWilf 312$.
	It then follows directly from \cite[Remark~2.5(b)]{Reifegerste} that the number of elements in $\Av_n(312)$ with exactly $k$ descents is
	\[
		N_{n,k} := \frac{1}{n}\binom{n}{k}\binom{n}{k+1}.
	\]
	Since the sequence $\{N_{n,k}\}_{k = 0}^{n-1}$ is symmetric for fixed $n$, and since
	\[
		\des(\sigma) = n - 1 - \des(\sigma^c),
	\]
	we have 
	\[
		(312,M_2) \desWilf 312 \desWilf 132.
	\]
	Again by Theorem~\ref{thm:superfluous}, we have $\Av_n(132) = \Av_n((132,M_1))$, so these two patterns are $\des$-Wilf equivalent as well.
	Connecting the equivalences, the claim follows.
\end{proof}


Characterizing the $\des$-Wilf classes for mesh patterns $(\pi,M)$ where $\pi \in \symm_4$ is difficult, and we will not attempt to fully characterize the $\des$-Wilf equivalence classes of such patterns.
In what follows, we merely wish to present a step toward understanding these in more depth, but first we need two more definitions.

If $A \subseteq [n]$, $f \in D_4$, and $\sigma \in \symm_A$, then we let $f^A(\sigma)$ denote the unique element of $\symm_A$ whose standardization is $f(\std(\sigma))$.
We say that $f^A$ is a dihedral action \emph{relative to $A$}.
As a simple example, if $7461 \in \symm_{\{1,4,6,7\}}$, then $\std(7461) = 4231$ and $R_{90}^{\{1,4,6,7\}}(\sigma) = 1647$. 

\begin{thm} We have
\[
	\textpattern{3,1,2,4}{1/0,1/1,1/2,1/3,1/4,2/0,2/1,2/2,2/3,2/4}
	\desWilf
	\textpattern{2,3,1,4}{1/0,1/1,1/2,1/3,1/4,2/0,2/1,2/2,2/3,2/4}
	\desWilf
	\textpattern{2,4,1,3}{1/0,1/1,1/2,1/3,1/4,2/0,2/1,2/2,2/3,2/4}
\]
\end{thm}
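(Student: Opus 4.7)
My plan is to prove the two equivalences $P_1 \desWilf P_2$ and $P_2 \desWilf P_3$ (labeling the three displayed mesh patterns in order); transitivity then gives the theorem. In all three cases the shaded columns force positions $1,2,3$ of an occurrence to be consecutive in $\sigma$, so it is natural to decompose $\sigma = \sigma' \cdot n \cdot \sigma''$ with $n$ at some position $k$ and analyze how the avoidance condition splits between $\sigma'$, $\sigma''$, and triples crossing the position of $n$. A preliminary check shows that no three-consecutive triple containing $n$ can produce a violation of any $P_i$: the only way $n$ fits into a triple of the required internal shape is at the position designated as the triple's maximum, whereupon the ``later value larger than the maximum'' or ``later value in an interval beneath the maximum'' requirement is unsatisfiable.

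For $P_1 \desWilf P_2$, this decomposition factors cleanly. I would verify that $\sigma \in \Av_n(P_1)$ if and only if $\std(\sigma')$ avoids the vincular pattern $\underline{312}$ (three consecutive entries forming $312$) and $\std(\sigma'')$ avoids $P_1$, with the analogous statement for $P_2$ using $\underline{231}$. Since the classical reverse-complement map is a descent-preserving bijection $\Av_m(\underline{312}) \to \Av_m(\underline{231})$, I would define
\[
  \Phi(\sigma) \;=\; R_{180}^{V'}(\sigma')\cdot n\cdot \Phi(\sigma''),
\]
where $V'$ is the value set of $\sigma'$ and $R_{180}^{V'}$ is the relative dihedral action introduced just above the theorem. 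Then $\Phi$ is a bijection $\Av_n(P_1)\to \Av_n(P_2)$, and it preserves descents: $R_{180}^{V'}$ preserves $\des(\sigma')$, $\Phi$ preserves $\des(\sigma'')$ inductively, and $n$ contributes one ascent (at position $k-1$) and one descent (at position $k$) in both $\sigma$ and $\Phi(\sigma)$.

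For $P_2 \desWilf P_3$ the same strategy breaks down. Avoidance of $P_3$ couples $\sigma'$ and $\sigma''$: for every three-consecutive $231$ triple $(a,b,c)$ of $\sigma'$, every integer strictly between $a$ and $b$ must appear in $\sigma'$ rather than in $\sigma''$, so no fixed pattern $X$ characterizes the admissible prefixes via ``$\std(\sigma')\in \Av(X)$.'' Instead I would seek a direct descent-preserving bijection $\Psi: \Av_n(P_2)\to \Av_n(P_3)$ again via a relative dihedral action $R_{180}^A$, but now with $A\subseteq [n]$ a subset of \emph{values} selected from $\sigma$ itself---for example, the ``middle'' value $b$ of a canonically chosen $231$ triple together with a distinguished later value in either the range $(b,n]$ or $(a,b)$, so that $R_{180}^A$ locally exchanges the $P_2$-forbidden configuration (``$>b$ later'') with the $P_3$-forbidden one (``$\in(a,b)$ later''). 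The principal obstacle is verifying that such a swap yields a well-defined involution preserving the global descent count: because the classical patterns $2314$ and $2413$ lie in different $D_4$-orbits, no trivial Wilf symmetry applies, and the bijection must be constructed by hand.
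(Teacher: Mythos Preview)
Your argument for $P_1 \desWilf P_2$ is correct and is genuinely different from the paper's. The paper builds a $\des$-preserving bijection between the \emph{complement} sets $\symm_n\setminus\Av_n(P_1)$ and $\symm_n\setminus\Av_n(P_2)$: given $\sigma$ containing $P_1$ but not $P_2$, it locates the leftmost occurrence $a_ja_{j+1}a_{j+2}a_p$, extends leftward to a maximal block $a_i\cdots a_p$ of values below $a_p$, and applies $R_{180}^A$ to that block. Your recursive factorization $\sigma=\sigma'\cdot n\cdot\sigma''$ with the clean characterization ``$\std(\sigma')$ avoids $\underline{312}$ and $\std(\sigma'')$ avoids $P_1$'' (and the analogous statement for $P_2$) is more structural and, once the factorization is checked, makes both bijectivity and the $\des$-preservation immediate. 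The paper's approach has the advantage of being a single global move rather than a recursion, but it requires more care to verify invertibility.

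For $P_2 \desWilf P_3$, however, your proposal is not yet a proof. You correctly identify why the factorization fails (the $P_3$ condition on a $231$ triple in $\sigma'$ depends on the value content of $\sigma''$), and you gesture at swapping two values via $R_{180}^A$, but you do not specify which values, nor do you verify that the resulting map is a well-defined $\des$-preserving involution; you explicitly flag this as ``the principal obstacle.'' The paper fills this gap with a concrete construction: working again on the complements, it takes the first occurrence $a_ja_{j+1}a_{j+2}a_p$ of $P_3$ and \emph{transposes} $a_{j+1}$ with a carefully chosen later value (either $a_p$ itself, or the maximum of a certain increasing run of values lying strictly between $a_j$ and $a_{j+1}$), the choice being made precisely so that the transposition preserves $\des$. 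To complete your write-up you need either to supply an analogous explicit rule and check it, or to adopt the paper's transposition.
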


\begin{proof}
	First consider $(\pi_1,M_1) = \textpattern{3,1,2,4}{1/0,1/1,1/2,1/3,1/4,2/0,2/1,2/2,2/3,2/4}$ and $(\pi_2,M_2) = \textpattern{2,3,1,4}{1/0,1/1,1/2,1/3,1/4,2/0,2/1,2/2,2/3,2/4}$. 
	To prove their $\des$-Wilf equivalence, we will form a $\des$-preserving bijection
	\[
		\alpha: \symm_n \setminus \Av_n((\pi_1,M_1)) \to \symm_n \setminus \Av_n((\pi_2,M_2)),
	\]
	that is, a $\des$-preserving bijection between permutations in $\symm_n$ containing $(\pi_1,M_1)$ and those containing $(\pi_2,M_2)$.

	Suppose $\sigma = a_1\dots a_n \in \symm_n$ contains $(\pi_1,M_1)$.
	If $\sigma$ contains $(\pi_2,M_2)$, then set $\alpha(\sigma) = \sigma$.
	Otherwise, let $j$ be the smallest index in which an occurrence of $(\pi_1,M_1)$ begins, and consider $a_ia_{i+1}\dots a_p$, where
	\[
		 p = \min \{ m \mid m > j+2,\, a_m >a_j\} \text{ and } i = \min \{ m \mid m \leq j,\, a_m,a_{m+1},\dots,a_j < a_p\}.
	\]
	
	Let $A = \{a_i, a_{i+1}, \dots, a_p\}$, and set
	\[
		R_{180}^A(a_i\dots a_p) = b_i\dots b_p,
	\]	
	and further set 
	\[
		\alpha(\sigma) = a_1\dots a_{i-1}b_i\dots b_pa_{p+1}\dots a_n. 
	\]
	
	Since $R_{180}^A$ is aa $\des$-preserving map, we have that for any $k \in \{1,\dots, p-1-i\}$, $i + k \in \Des(\sigma)$ if and only if $p-k \in \Des(\alpha(\sigma))$.
	Additionally, for any $k \in \{1,\dots,i-1,p,p+1,\dots,n-1\}$, $k \in \Des(\sigma)$ if and only if $k \in \Des(\alpha(\sigma))$.	
	Thus, $\alpha$ is $\des$-preserving.

	To show that $\alpha$ is invertible, we will construct a map
	\[
		\beta: \symm_n \setminus \Av_n((\pi_2,M_2)) \to \symm_n \setminus \Av_n((\pi_1,M_1))
	\]
	and show that $\beta \circ \alpha$ is the identity map on $\symm_n \setminus \Av_n((\pi_1,M_1))$.
	If $\sigma' = a_1'\dots a_n'$ contains $(\pi_2,M_2)$, then we create $\beta(\sigma)$ by first testing a construction similar to the one from the previous paragraph.
	Namely, let $j'$ be the smallest index in which an occurrence of $(\pi_2,M_2)$ begins, and consider $a_i'a_{i+1}'\dots a_p'$, where
	\[
		 p' = \min \{ m \mid m > j'+2,\, a_m' >a_{j'+1}'\} \text{ and } i' = \min \{ m \mid m \leq j',\, a_m',a_{m+1}',\dots,a_{j'} < a_p'\}.
	\]
	This time, let $A' = \{a_i', a_{i+1}', \dots, a_p'\}$, and set
	\[
		R_{180}^{A'}(a_i'\dots a_p') = b_i'\dots b_p'.
	\]
	If $a_1'\dots a_{i-1}'b_i'\dots b_{p'}'a_{p'+1}'\dots a_n'$ contains both $(\pi_2,M_2)$ and $(\pi_1,M_1)$, then set $\beta(\sigma') = \sigma'$.
	Otherwise, set $\beta(\sigma') = a_1'\dots a_{i-1}'b_i'\dots b_{p'}'a_{p'+1}'\dots a_n'$.
	The fact that $\beta \circ \alpha$ is the identity map on $\symm_n \setminus \Av_n((\pi_1,M_1))$ follows from construction. 
	
	Now consider $(\pi_2,M_2)$ and $(\pi_3,M_3) = \textpattern{2,4,1,3}{1/0,1/1,1/2,1/3,1/4,2/0,2/1,2/2,2/3,2/4}$.
	Suppose $\sigma=a_1a_2\cdots a_n$ and $a_ja_{j+1}a_{j+2}a_p$ is the first copy of $(\pi_3,M_3)$, as identified in the second paragraph in this proof.
	If $a_p$ is the only $a_l$ for which $l > j+2$ and $a_l>a_j$, then set $\alpha(\sigma)$ to be $\sigma$ with $a_{j+1}$ and $a_p$ transposed.
	Otherwise, choose 
	\[
		r = \min \{ l \mid a_j<a_l<a_{j+1},\,l>j+2 \}.
	\]
	Let $S=\{ a_r,a_{r+1},\dots,a_q\}$ where $q$ is the maximum index for which $\{a_r,a_{r+1},\dots,a_q\}$ is increasing and $a_j < a_k < a_{j+1}$ for all $k \in S$.
        Set $\alpha(\sigma)$ to be $\sigma$ with $a_{j+1}$ and $\max S$ transposed.
        By choosing the maximum of $S$ we are guaranteeing that $\alpha$ is $\des$-preserving.
        By construction, $\alpha(\sigma)$ contains an occurrence of $(\pi_2,M_2)$.
        Using an argument similar to the first part of this proof, $\alpha$ is invertible and is therefore a bijection.
\end{proof}


%
%

Recall that the \emph{Stirling numbers of the second kind}, denoted $S(n,k)$, record the number of ways to partition $[n]$ into $k$ nonempty blocks.
Here, we will begin to find useful the notation
\[
	\Av_n^{\des,k}(\Pi) = \{ \sigma \in \Av_n(\Pi) \mid \des(\sigma) = k\}.
\]

\begin{prop}
	Let $(\pi,M) = \textpattern{1,3,2}{2/0}$. 
	For all $n$, we have
	\[
		F_n^{\des}\left((\pi,M); q\right) = \sum_{k = 0}^{n-1} S(n,k+1)q^k.
	\]
\end{prop}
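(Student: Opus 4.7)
My plan is to set up a descent-preserving bijection between $\Av_n((132,\{(2,0)\}))$ and set partitions of $[n]$, in which permutations with $k$ descents correspond to partitions into $k+1$ blocks. This will immediately yield $|\Av_n^{\des,k}((132,\{(2,0)\}))| = S(n,k+1)$ and hence the stated generating function.

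The first step is a structural characterization: $\sigma$ avoids $(132,\{(2,0)\})$ if and only if every descent bottom of $\sigma$ is a left-to-right minimum. For the forward direction, if $\sigma$ has a descent $a_j > a_{j+1}$ and some earlier letter $a_i$ with $i<j$ satisfies $a_i < a_{j+1}$, then $a_i a_j a_{j+1}$ is a 132 occurrence whose shaded square lies strictly between the consecutive columns $j$ and $j+1$ and is therefore vacuously empty, so $\sigma$ contains $(132,\{(2,0)\})$. For the converse, suppose every descent bottom is a left-to-right minimum and take any 132 occurrence at positions $i<j<k$ with $a_i < a_k < a_j$. Since $a_j > a_k$, some position $m \in \{j,\dots,k-1\}$ is a descent, and then $a_{m+1}$ is a left-to-right minimum, so $a_{m+1} < a_i$. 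Because $a_i < a_k$ forces $a_{m+1} \neq a_k$, we get $j < m+1 < k$, so the shaded region of this 132 occurrence contains $a_{m+1}$ and the copy is not a copy of the mesh pattern.

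An equivalent phrasing of this characterization is that the avoiders are exactly those permutations decomposing into maximal increasing runs whose starting values are strictly decreasing; there are $\des(\sigma)+1$ such runs, and those starting values are precisely the left-to-right minima. The bijection sends $\sigma$ to the set partition of $[n]$ whose blocks are the value-sets of its maximal increasing runs. The inverse takes a set partition of $[n]$ into $k+1$ blocks, sorts each block increasingly, and concatenates the blocks in decreasing order of their minimum elements; the resulting permutation has exactly $k$ descents (one at each block boundary, by the decreasing-minima arrangement) and its descent bottoms are the block minima, each smaller than every element of every earlier block, so the structural criterion certifies it as an avoider. I expect the main obstacle to be the converse direction of the structural characterization, where one must upgrade the mere existence of a descent inside $\{j,\dots,k-1\}$ forced by $a_j > a_k$ to an actual value strictly less than $a_i$ sitting strictly between columns $j$ and $k$; once that is in hand, the bijection and the resulting count $S(n,k+1)$ are routine bookkeeping on increasing runs.
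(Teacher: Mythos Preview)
Your proof is correct and follows essentially the same bijection as the paper: maximal increasing runs of $\sigma$ correspond to blocks of the set partition, with the inverse given by concatenating blocks in decreasing order of their minima. The only real difference is that the paper imports the structural characterization of $\Av_n((132,\{(2,0)\}))$ from \cite{BursteinLankham}, whereas you prove it directly via the clean ``every descent bottom is a left-to-right minimum'' criterion; your self-contained argument for both directions of that equivalence is sound.
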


\begin{proof}
	Let $\Sigma_{n,k}$ denote the collection of set partitions of $[n]$ into exactly $k$ nonempty blocks.
	We will create a bijection
	\[
		f: \Av_n^{\des,k}((\pi,M)) \to \Sigma_{n,k+1},
	\]
	from which the conclusion follows.
	
	First, let $\sigma = a_1\dots a_n \in \Av_n^{\des,k}((\pi,M))$.
	It follows from \cite[Theorem 4.1]{BursteinLankham} that any such permutation is the concatenation of substrings
	\begin{eqnarray*}
		a_1 &< \dots <& a_{i_0} \\
		a_{i_0+1} &< \dots <& a_{i_1} \\
		 & \vdots & \\
		a_{i_k + 1} &< \dots <& a_n,
	\end{eqnarray*}
	where $a_1 < a_{i_j+1} > a_{i_{j+1}+1} $ for all $j$.
	In particular, the values $a_{i_0},\dots,a_{i_k}$ determine the entire permutation.
	
	Associate to $\sigma$ the set partition
	\[
		f(\sigma) = \{\{a_1, \dots,  a_{i_0}\}, \{a_{i_0+1}, \dots, a_{i_1}\}, \dots, \{a_{i_k + 1}, \dots, a_n\}\}.
	\]
	Note that if $\sigma = 12\dots n$, then $k = 0$, so this partition consists of only one block.
	Thus, if $\sigma$ has $k$ descents, then the partition obtained has $k+1$ blocks. 
	Because each choice of the $a_{i_j}$ determines $\sigma$, we know that $f(\sigma) \neq f(\sigma')$ whenever $\sigma' \in \Av_n^{\des,k}((\pi,M))$ and $\sigma \neq \sigma'$. 
	That is, $f$ is injective.
	
	Now we will show that $f$ is surjective.
	Consider a set partition $B = \{B_1,\dots,B_{k+1}\}$ of $[n]$ into $k+1$ blocks.
	We are free to write the $B_i$ such that
	\[
		B_i = \{b_{i,1} < \dots < b_{i,i_l}\} \text{ and } \min B_i < \min B_{i+1}
	\]
	for all $i$.
	Construct the permutation 
	\[
		b_{k+1,1}b_{k+1,2}\dots b_{k+1,i_{k+1}}b_{k,1}b_{k,2}\dots b_{k,i_k}\dots b_{1,1}b_{1,2}\dots b_{1,i_1}.
	\]
	We claim that this permutation is an element of $\Av_n^{\des,k}((\pi,M))$.
	
	Any occurrence of $\textpattern{1,3,2}{}$, say, $b_{\alpha}b_{\beta}b_{\gamma}$, implies that $b_{\alpha} \in B_i$, $b_{\beta} \in B_j$, and $b_{\gamma} \in B_k$ for some $i \leq j < k$. 
	Since the sequence of minima of the blocks is decreasing, we know that $\min B_k < b_{\alpha} < b_{\gamma}$.
	Thus, the string
	\[
		b_{\alpha}b_{\beta}(\min B_k)b_{\gamma}
	\]
	is an occurrence of $\textpattern{1,3,2}{2/0}$.
	Since the elements of the blocks strictly increase, the minima decrease, and since there are $k+1$ blocks, there are $k$ descents in the permutation.
	Thus $f$ is surjective, completing the proof.
\end{proof}

\begin{example}
	Consider the permutation $3427156 \in \Av_6^{\des,2}\left(\textpattern{1,3,2}{2/0}\right)$.
	Our construction in the previous proof associates to this permutation the partition
	\[
		\{\{3,4\},\{2,7\},\{1,5,6\}\}.
	\]
	In the other direction, given the set partition
	\[
		\{\{5\},\{3,1,4\},\{7,2,6\}\} = \{\{5\},\{2,6,7\},\{1,3,4\}\},
	\]
	we obtain the permutation $5267134$, which the reader may verify is indeed an element of $\Av_7^{\des,2}\left(\textpattern{1,3,2}{2/0}\right)$. 
\end{example}

A \emph{Motzkin path of length $n$} is a lattice path from $(0,0)$ to $(n,0)$ using only \emph{up-steps} $(1,1)$, \emph{down-steps} $(1,-1)$, and \emph{horizontal steps} $(1,0)$ such that the path does not go below the $x$-axis. 
An example is shown in Figure~\ref{fig: Motzkin path}.
We let $\mathcal{M}_{n,k}$ denote the set of Motzkin paths of length $n$ with exactly $k$ up-steps.

The next result we present was first proven in \cite{ChenDengYang} by writing Motzkin paths according to a ``strip decomposition'' and by writing permutations according to canonical reduced decompositions.
Here, we present a new, simpler proof.
To do so, we only need a few more definitions.

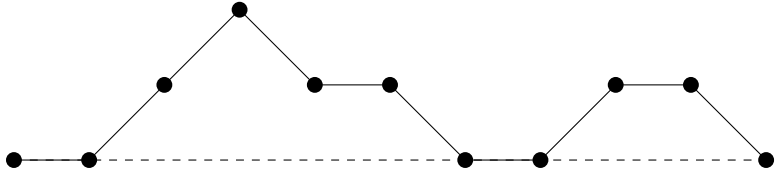
\begin{figure}
	\begin{tikzpicture}[every node/.style={inner sep=0}]
		\draw[dashed] (0,0) -- (10,0);
		\node (A) at (0,0) {};
		\node (B) at (1,0) {};
		\node (C) at (2,1) {};
		\node (D) at (3,2) {};
		\node (E) at (4,1) {};
		\node (F) at (5,1) {};
		\node (G) at (6,0) {};
		\node (H) at (7,0) {};
		\node (I) at (8,1) {};
		\node (J) at (9,1) {};
		\node (K) at (10,0) {};
		
		\draw (A) -- (B) -- (C) -- (D) -- (E) -- (F) -- (G) -- (H) -- (I) -- (J) -- (K);
		
		\foreach \xy in {(0,0),(1,0),(2,1),(3,2),(4,1),(5,1),(6,0),(7,0),(8,1),(9,1),(10,0)}
		{
			\fill \xy circle [radius=3pt];
		}
	\end{tikzpicture}
	\caption{A Motzkin path of length $10$ with $3$ up-steps.}\label{fig: Motzkin path}
\end{figure}

If $i$ is a descent of $\sigma = a_1\dots a_n$, then we call $a_i$ a \emph{descent top} and $a_{i+1}$ a \emph{descent bottom}.
Let $\Destop(\sigma)$ denote the set of descent tops of $\sigma$ and let $\Desbot(\sigma)$ denote the set of descent tops of $\sigma$. 
A \emph{valley} in $\sigma$ is an element $i$ for which $a_{i-1} > a_i < a_{i+1}$.

\begin{thm}[Theorem 3.1, \cite{ChenDengYang}]
	Let $\Pi = \left\{\textpattern{3,2,1}{}, \textpattern{2,3,1}{1/0}\right\}$.
	For all $n$,
	\[
		F_n^{\des}(\Pi; q) = \sum_{k=0}^n |\mathcal{M}_{n,k}|q^k
	\]
\end{thm}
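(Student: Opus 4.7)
The plan is to construct a descent-preserving bijection $\varphi \colon \Av_n(\Pi) \to \mathcal{M}_n$. The first step is to simplify the mesh-avoidance condition: I claim that a $321$-avoiding permutation $\sigma$ avoids $\textpattern{2,3,1}{1/0}$ if and only if every ascent position $i$ satisfies $a_i = \min\{a_i, a_{i+1}, \ldots, a_n\}$. If this property fails at an ascent $i$ with some later $k$ satisfying $a_k < a_i$, taking $j = i+1$ yields an occurrence of the mesh pattern (the shaded region is vacuous). Conversely, any mesh occurrence at $i < j < k$ forces $a_{i+1} > a_i$: otherwise the mesh condition $a_{i+1} > a_k$ would combine with $a_i > a_{i+1}$ to produce $a_i, a_{i+1}, a_k$ as a $321$, contradicting $321$-avoidance; hence $i$ is an ascent with $a_i$ not minimal in its suffix. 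Standard $321$-avoidance arguments then give: descent positions $d_1 < \cdots < d_k$ in $\sigma \in \Av_n(\Pi)$ are pairwise non-adjacent, the descent tops $t_1 < \cdots < t_k$ (each a left-to-right maximum) satisfy $a_{d_\ell} = t_\ell$, the descent bottoms $b_1 < \cdots < b_k$ (each a non-left-to-right maximum) satisfy $a_{d_\ell+1} = b_\ell$, and $b_\ell < t_\ell$ for each $\ell$.

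Define $\varphi(\sigma) = s_1 s_2 \cdots s_n$ by letting $s_v$ be an up-step if $v$ is a descent bottom of $\sigma$, a down-step if $v$ is a descent top, and a horizontal step otherwise (these sets are disjoint since $\sigma$ avoids $321$). For every $v$, if $\ell^*$ tops are at most $v$, then $b_\ell < t_\ell \leq v$ for $\ell \leq \ell^*$ gives at least $\ell^*$ bottoms also at most $v$, so $\varphi(\sigma)$ has nonnegative height at every prefix and is a valid Motzkin path. Since the number of up-steps equals the number of descent bottoms, which equals $\des(\sigma)$, once $\varphi$ is shown to be bijective the theorem follows.

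For bijectivity I would construct an inverse $\psi$ recursively. If $\mu_1$ is a horizontal step, set $\psi(\mu) = 1 \cdot (\psi(\mu_2 \cdots \mu_n) + 1)$. If $\mu_1$ is an up-step, let $T_1$ be the position of the first down-step of $\mu$; because positions $2, \ldots, T_1 - 1$ contain no down-steps, removing positions $1$ and $T_1$ yields a valid Motzkin path $\mu'$ of length $n - 2$, and we set $\psi(\mu) = T_1 \cdot 1 \cdot \theta$, where $\theta$ is the order-preserving relabeling of $\psi(\mu')$ onto $\{2, \ldots, n\} \setminus \{T_1\}$. The main obstacle is verifying $\psi(\mu) \in \Av_n(\Pi)$ in the up-step case: the only nontrivial $321$-check is that $a_1 = T_1$ cannot be the top of a $321$, which reduces to showing that values smaller than $T_1$ occupy positions $2, \ldots, n$ in increasing order. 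For this I would isolate the lemma that in any $\tau \in \Av_m(\Pi)$, if none of $1, \ldots, j$ is a descent top of $\tau$, then these values occur in increasing order of position; indeed, an inversion $v_1 < v_2$ among them would place $v_2$ at a non-descent-top hence ascent position, and the ascent-minimum characterization would then force $v_2 < v_1$, a contradiction. Applying this with $j = T_1 - 2$ to $\psi(\mu')$, whose first $T_1 - 2$ step-labels are not down-steps by minimality of $T_1$, provides the required sortedness, and verifying $\psi \circ \varphi = \mathrm{id}$ and $\varphi \circ \psi = \mathrm{id}$ is a routine induction on $n$.
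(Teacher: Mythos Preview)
Your argument is correct, and the forward map $\varphi$ is exactly the bijection the paper uses (indeed, the paper's proof text swaps ``up'' and ``down'' relative to its own worked example; your convention agrees with the example). Where you diverge is in the surrounding machinery. The paper never isolates the ascent-minimum characterization; instead it argues directly that in any $\sigma\in\Av_n(\Pi)$ the descent tops, the descent bottoms, and in fact the full subsequence of non-descent-tops each appear in increasing order, and from this deduces that $\Destop(\sigma)$ and $\Desbot(\sigma)$ together determine $\sigma$, giving injectivity. For surjectivity the paper builds the preimage of a Motzkin path in one shot: list $1,\dots,n$ omitting the down-step labels, then insert the $i^{\text{th}}$ down-step label immediately before the $i^{\text{th}}$ up-step label. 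Your approach trades this global description for a recursive inverse $\psi$, which costs you the extra lemma about the values $1,\dots,j$ appearing in order and an induction, but pays for itself with the clean ascent-minimum reformulation of the mesh condition. Either route works; the paper's is shorter, while yours makes the role of the mesh restriction more transparent.
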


\begin{proof}
	We will form a bijection
	\[
		\mu: \Av_n^{\des,k}\left(\Pi\right) \to \mathcal{M}_{n,k}.
	\]
	For $\sigma = a_1\dots a_n \in \Av_n^{\des,k}\left(\Pi\right)$, let $\mu(\sigma)$ be the lattice path obtained by making step $a_i$ a down-step if $a_i$ is a descent bottom, 
	an up-step if $a_i$ is a descent top, and a horizontal step if $a_i$ is neither.
	
	First, we need to check that $\mu$ is well-defined.
	Note that no letter of $\sigma$ can be both a descent top and a descent bottom, since this would imply $\sigma$ contains an instance of $\textpattern{3,2,1}{}$, which is forbidden.
	So, since the sets of descent tops and of descent bottoms are disjoint, and these appear in pairs, then we can be certain that the path constructed by $\mu$ has length $n$ and ends at $(n,0)$.
	Moreover, since a descent top always appears before a descent bottom, at no step of the path can there have been more down-steps than up-steps.
	This establishes that $\mu(\sigma)$ is a Motzkin path of length $n$.
	Finally, since there are $k$ descents, there are $k$ descent tops, and $\mu(\sigma)$ will have $k$ up-steps.
	Hence, $\mu(\sigma) \in \mathcal{M}_{n,k}$.
	 
	Next we will show that $\mu$ is injective.
	To do so, we will determine exactly the structure of the elements in $\Av_n\left(\Pi\right)$.
	Notice that the descent bottoms of $\sigma$ must appear in increasing order in $\sigma$, since, otherwise, there would be an occurrence of $\textpattern{3,2,1}{}$.
	For the same reason, the descent tops must appear in increasing order in $\sigma$.
	
	Let $\sigma = a_1\dots a_n \in \Av_n^{\des,k}\left(\Pi\right)$ and suppose that $i$ is neither a descent top nor descent bottom.
	Suppose for now that $j$ is the first descent greater than $i$.
	If $a_{j+1} < a_i < a_j$, then $a_ia_ja_{j+1}$ is an occurrence of $231$.
	Since $\sigma$ avoids $\textpattern{2,3,1}{1/0}$, there must be some $l$ for which $\sigma$ has the subsequence $a_ia_la_ja_{j+1}$ and $a_l < a_{j+1}$.
	This implies that some integer $i+1,i+2,\dots,l-1$ is a descent, which contradicts the fact that $j$ is the first descent greater than $i$. 
	So, it must be true that $a_i < a_{j+1} < a_j$.
	Since $j$ is the first descent greater than $i$, it follows that $a_ia_{i+1}\dots a_{j-1}a_{j+1}$ is an increasing sequence.
	It follows that the subsequence of $\sigma$ consisting of all letters that are not descent tops is an increasing sequence.
	
	Now we will show that $\mu$ is injective.
	If $\mu(\sigma_1) = \mu(\sigma_2)$ for $\sigma_1,\sigma_2 \in \Av_n\left(\Pi\right)$, then $\sigma_1$ and $\sigma_2$ have the same descent topsets and the same descent bottomsets, since these are identified by the up-steps and down-steps in the Motzkin path.
	Our description of elements of $\Av_n\left(\Pi\right)$ shows that once the descent topsets and descent bottomsets have been identified, there is a unique $\sigma$ in the avoidance class with those sets.
	Therefore, $\sigma_1 = \sigma_2$, and $\mu$ is injective. 
	
	Finally, we will show that $\mu$ is surjective.
	Let $A \in \mathcal{M}_{n,k}$, and label its steps $1,\dots,n$ from left to right.
	We will construct its preimage in stages.
	First write down $1,\dots,n$ but excluding the labels on the down-steps.
	Then insert the label on the $i^{th}$ down-step immediately before the label of the $i^{th}$ up-step.
	Call the resulting permutation $\sigma_A$.
	Using the description of elements of $\Av_n\left(\Pi\right)$ from earlier in this proof, we see that $\sigma_A \in \Av_n\left(\Pi\right)$.
	Additionally, it is clear that $\mu(\sigma_A) = A$ by our construction of $\sigma_A$ and the definition of $\mu$.
	Therefore, $\mu$ is surjective, completing the proof.
\end{proof}

\begin{example}
	Let $A$ be the Motzkin path in Figure~\ref{fig: Motzkin path}.
	Steps $2$, $3$, and $8$ are up-steps, and therefore will be descents bottoms.
	Steps $4$, $6$, and $10$ are down-steps, so these will be descent tops. 
	The remaining numbers will be neither descent tops nor bottoms.
	
	When the descent tops are removed from $\mu^{-1}(A)$, the result will be an increasing string of numbers: $1235789$. 
	The descent tops are then placed immediately preceding the descent bottoms, to obtain $1426357(10)89$.
\end{example}

For the final result of the section, we make two notes.
First, recall that the \emph{Eulerian polynomial} $A_n(q)$ is the polynomial
\[
	\sum_{\sigma \in \symm_n} q^{\des(\sigma)} = A_n(q).
\]
It should be noted that some authors (e.g. in \cite{StanleyVol1}) define the Eulerian polynomials using $q^{\des(\sigma) + 1}$ rather than the definition given here.
So, one should take care when encountering Eulerian polynomials in the literature.
Second, recall from the end of Section~$2$ what it means for a permutation to contain and avoid the barred patterns $\bar1\bar243$ or $\bar132\bar4$.

\begin{prop}\label{prop:barred prop}
	For all $n$,
	\[
		F_n^{\des}(\bar1\bar243; q) = F_n^{\des}(\bar132\bar4; q) = 
			\begin{cases}
				1 & \text{ if } n = 0,1 \\
				A_{n-2}(q) & \text{ if } n \geq 2 
			\end{cases}
	\]
\end{prop}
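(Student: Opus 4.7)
The plan is to characterize $\Av_n(\bar1\bar243)$ and $\Av_n(\bar132\bar4)$ completely, and then to show that the descent statistic on each class is transparently governed by $A_{n-2}(q)$ via a standardization argument. I expect the whole proof to be fairly short once the avoidance classes are pinned down; the only delicate part is reading the barred-pattern conditions correctly at the boundary indices.

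First I would argue that $\sigma = a_1 \cdots a_n \in \Av_n(\bar1\bar243)$ if and only if $a_1 = 1$ and $a_2 = 2$. For the forward direction, note that any inversion $(i,j)$ must extend to a $1243$-pattern whose first two letters lie strictly to the left of position $i$. If $a_1$ were not the minimum value, then $a_1$ would be the large letter of some inversion $(1,j)$, for which no such $k<l<1$ exists; hence $a_1 = 1$. Once $a_1 = 1$, if $a_2 \neq 2$ then the value $2$ sits at some position $j \geq 3$, producing an inversion $(2,j)$ for which no $k < l < 2$ can be found; hence $a_2 = 2$. Conversely, if $a_1 = 1$ and $a_2 = 2$, then for every inversion $(i,j)$ we must have $i \geq 3$ and $a_j \geq 3$, so $k=1,l=2$ supplies the required $1243$-occurrence automatically.

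Next I would give the analogous characterization: $\sigma \in \Av_n(\bar132\bar4)$ if and only if $a_1 = 1$ and $a_n = n$. The condition in this case requires, for each inversion $(i,j)$, an index $k<i$ with $a_k < a_j$ and an index $l > j$ with $a_l > a_i$. Inversions with $i = 1$ cannot be completed on the left, forcing $a_1 = 1$; inversions with $j = n$ cannot be completed on the right, forcing $a_n = n$. Conversely, once $a_1 = 1$ and $a_n = n$, every inversion $(i,j)$ has $2 \leq i < j \leq n-1$, and we may take $k=1,l=n$.

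Finally, for the generating function I would observe that in the first case $\sigma$ has the form $1\,2\,a_3 \cdots a_n$, and neither position $1$ nor position $2$ can be a descent (since $a_3 \geq 3 > 2$); therefore $\des(\sigma) = \des(\std(a_3 \cdots a_n))$, and the map $\sigma \mapsto \std(a_3\cdots a_n)$ is a $\des$-preserving bijection from $\Av_n(\bar1\bar243)$ onto $\symm_{n-2}$. In the second case $\sigma$ has the form $1\,a_2 \cdots a_{n-1}\,n$, neither the first nor the last position is a descent, and $\sigma \mapsto \std(a_2 \cdots a_{n-1})$ is likewise a $\des$-preserving bijection onto $\symm_{n-2}$. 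Summing $q^{\des}$ over $\symm_{n-2}$ yields $A_{n-2}(q)$ in both cases, and the base cases $n=0,1$ (where the only permutation has $\des = 0$) handle the top branch. The main obstacle, such as it is, is to correctly translate the barred-pattern conditions into the two boundary constraints; once that is done the rest is routine.
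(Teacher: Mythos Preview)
Your proposal is correct and follows essentially the same approach as the paper: both arguments characterize $\Av_n(\bar1\bar243)$ as the permutations with $a_1=1,\ a_2=2$ and $\Av_n(\bar132\bar4)$ as those with $a_1=1,\ a_n=n$, then read off $A_{n-2}(q)$ via the obvious $\des$-preserving bijection to $\symm_{n-2}$. The only cosmetic differences are the order of the deductions (the paper first shows $a_1<a_2$, then $a_2=2$, then infers $a_1=1$, whereas you go directly to $a_1=1$) and the specific witness inversion used to force $a_2=2$.
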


\begin{proof}
	We will first show that $F_n(\bar1\bar243;q)$ satisfies the right hand side.
	The conclusion is clearly true for $n < 2$, so we will restrict our attention to when $n \geq 2$.
	Choose $\sigma = a_1\dots a_n \in \Av_n(\bar1\bar243)$.
	Note first that $a_1 < a_2$ since, if $a_1 > a_2$, then $a_1a_2$ would be an occurrence of $u(\bar1\bar243) = 21$ but this cannot extend to an occurrence of $1243$.

	Now, suppose $a_2 > 2$.
	Setting $a_m = \min\{a_i \mid 3 \leq i \leq n \}$we have $a_2 > a_m$, so $a_2a_m$ is an occurrence of $u(\bar1\bar243)$ in $\sigma$.
	However, there is only letter to the left of $a_2$, so this pattern does not extend to an instance of $1243$.
	Thus, $a_2 = 2$.
	Together with the previous paragraph, we know $a_1 = 1$ as well.
	In particular, $a_1 < a_2 < a_i$ for all $i \geq 3$.
	
	Now, take any occurrence $a_ia_j$ of $21$ in which $2 < i < j$.
	Clearly, $a_1a_2a_ia_j$ is an extension to $1243$.
	This holds for any possible permutation of $3,\dots,n$ as the final $n-2$ letters.
	Since $1$ and $2$ are never descents of these permutations, we have
	\[
		F_n^{\des}(\bar1\bar243;q) = A_{n-2}(q),
	\]
	as claimed.
	
	Now we will show that the same formula holds for $\bar132\bar4$. 
	This time, assume $\sigma \in \Av_n(\bar132\bar4)$.
	If $a_i = 1$ for some $i > 1$, then $a_1a_i$ would be an occurrence of $21$.
	However, this can never extend to $1324$ since there is no letter to the left of $a_1$.
	Thus, $a_1 = 1$.
	An analogous argument shows $a_n = n$. 
	
	This allows $a_2\dots a_{n-1}$ to be any arrangement of $2,3,\dots,n-1$, since, whenever $a_ia_j$ is an occurrence of $21$ for $2 \leq i,j \leq n-1$, this extends to $1a_ia_jn$.
	So, we have the bijection
	\[
		a_1a_2\dots a_n \mapsto (a_2-1)(a_3-1)\dots(a_{n-1}-1)
	\]
	with elements of $\symm_{n-2}$.
	Since $1$ and $n$ are never descents in $\Av_n(\bar132\bar4)$, this is a $\des$-preserving bijection.
	Therefore, $F_n^{\des}(\bar1\bar243;q) = A_{n-2}(q)$.	
\end{proof}


\section{Conjectures and Further Directions}

In this section, we provide a few conjectures, supporting data, and additional direction in which this work could proceed.
In all cases, no closed forms for the functions $F_n^{\des}(\Pi;q)$ are known.
We refer the reader to Table~\ref{tab:data} for all known polynomials $F_n^{\des}(\Pi;q)$ for $4 \leq n \leq 8$, since, for these choices of $\Pi$, $F_n^{\des}(\Pi;q) = F_n^{\des}(\emptyset;q)$ for $n \leq 3$.

\begin{conj}
The following $\des$-Wilf equivalences hold:
\[
	\textpattern{1,2,4,3}{1/0,1/1,1/2,1/3,1/4}
	\desWilf
	\textpattern{3,4,1,2}{1/0,1/1,1/2,1/3,1/4}
	\quad\text{and}\quad
	\textpattern{1,3,4,2}{1/0,1/1,1/2,1/3,1/4}
	\desWilf
	\textpattern{2,4,1,3}{1/0,1/1,1/2,1/3,1/4}.
\]
\end{conj}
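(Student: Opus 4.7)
The plan is to prove each of the two conjectured $\desWilf$ equivalences by constructing an explicit descent-preserving bijection between the containing classes, following the approach of the theorem proved earlier in the paper. For each pair of patterns, given $\sigma$ containing the first pattern, I would locate the leftmost occurrence at positions $(i, i+1, j, k)$, identify a block of positions (or values) surrounding this occurrence, and apply a carefully chosen local rearrangement (such as a restricted dihedral action $R_{180}^A$, or an interchange of specific letters) to produce a permutation containing the second pattern. Invertibility would follow from a mirror construction, and $\des$-preservation would be verified by analyzing exactly which adjacent comparisons are altered.

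Before writing the bijection, I would list the constraints defining an occurrence of each pattern: $(1243, \text{col } 1)$ requires $a_i < a_{i+1} < a_k < a_j$; $(3412, \text{col } 1)$ requires $a_j < a_k < a_i < a_{i+1}$; $(1342, \text{col } 1)$ requires $a_i < a_k < a_{i+1} < a_j$; and $(2413, \text{col } 1)$ requires $a_j < a_i < a_k < a_{i+1}$. In every case the first two letters of the occurrence form an ascent, so the bijection must preserve this ascent while ``flipping'' the remaining letter(s) between the upper region (above $a_{i+1}$) and the lower region (below $a_i$). In the first equivalence both of $a_j, a_k$ must be flipped, whereas in the second only $a_j$ must be, with $a_k$ remaining in the middle interval $(a_i, a_{i+1})$.

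The main obstacle is that only column $1$ is shaded, not columns $1$ and $2$ as in the earlier theorem, so the positions $j$ and $k$ may be arbitrarily far from $i+1$ and from each other. A naive $R_{180}^A$ applied to the block $[i, k]$ does not generally produce the target: for instance, $\sigma = 12543$ contains $(1243, \text{col } 1)$ at positions $(1, 2, 4, 5)$, but the rotation yields $32145$, which is descent-preserving and yet contains no occurrence of $(3412, \text{col } 1)$. A direct swap of $(a_i, a_{i+1})$ with $(a_j, a_k)$ is even more delicate, since it can alter ascents and descents at the four boundary positions $i-1$, $i+1$, $j-1$, $k$; a quick check (e.g.\ $\sigma = 213465$) shows that in general the descent count is not preserved. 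I expect the hardest step to be identifying the correct local rearrangement---almost certainly one whose form depends on the values of the neighbors of the occurrence---and verifying descent preservation in each case. If a direct bijection proves elusive, a viable alternative is to derive a common recurrence for $F_n^{\des}(\Pi; q)$, conditioning, for instance, on the position of $1$ or on the location of the leftmost adjacent ascent, using the numerical data in Table~\ref{tab:data} through $n = 8$ as a guide and for the base cases.
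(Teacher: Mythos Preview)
This statement is presented in the paper as a \emph{conjecture}, not a theorem: the authors give no proof, only the numerical evidence in Table~\ref{tab:data} for $n\le 8$. So there is no ``paper's own proof'' to compare against, and any argument you supply would be new mathematics going beyond the paper.

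That said, your proposal is not a proof but a plan, and you are candid about where it breaks. You correctly observe that the mesh here shades only column~$1$, so the later letters of an occurrence need not be consecutive with the initial ascent; this is exactly what prevents the $R_{180}^A$ trick from the earlier theorem from working. Your own counterexample $\sigma=12543$ already shows that the naive rotation fails to land in the target class, and your boundary-descent analysis for the swap approach shows that too is not $\des$-preserving in general. At that point the proposal stops: you have identified the difficulty but not resolved it. The suggestion to fall back on a common recurrence for $F_n^{\des}(\Pi;q)$ is reasonable, but you have not stated what the recurrence would be or why both sides satisfy it; the data in Table~\ref{tab:data} alone cannot supply this. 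As it stands, then, the proposal contains no argument that actually establishes either equivalence, which is consistent with the paper's decision to leave the statement as a conjecture.
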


To state our next conjecture, we must discuss a particular sorting of permutations.
Let $\sigma = a_1\dots a_n \in \symm_n$ and suppose $a_i = n$. 
Let $\Gamma$ be the operator defined recursively as
\[
	\Gamma(\sigma) = \Gamma(a_1\dots a_{i-1})\Gamma(a_{i+1}\dots a_n)n.
\]
We say that $\sigma$ is \emph{West-$t$-stack-sortable} if $\Gamma^t(\sigma)$ is the identity permutation.
Note that the $2$-West-stack-sortable permutations \cite{WestThesis} are exactly those in
\[
	\Av_n\left(\textpattern{2,3,4,1}{},\textpattern{3,2,4,1}{1/4}\right).
\]

\begin{conj}
	The following $\des$-Wilf equivalence holds:	
\[
	\left\{
	\textpattern{2,3,4,1}{},
	\textpattern{3,2,4,1}{1/4}
	\right\}
	\desWilf
	\left\{
	\textpattern{2,4,1,3}{},
	\textpattern{3,1,4,2}{2/2}
	\right\}
\]
\end{conj}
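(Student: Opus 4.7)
The plan is to prove the conjectured equivalence by constructing an explicit descent-preserving bijection
\[
	\phi : \Av_n(\Pi_1) \longrightarrow \Av_n(\Pi_2),
\]
where $\Pi_1 = \{2341,\,(3241,\{(1,4)\})\}$ is the left-hand set (characterizing West-$2$-stack-sortable permutations, by the remark immediately preceding the conjecture) and $\Pi_2 = \{2413,\,(3142,\{(2,2)\})\}$ is the right-hand set. The first step is computational: evaluate both polynomials $F_n^{\des}(\Pi_i;q)$ for $n \le 10$ to confirm the equivalence numerically and, more importantly, to spot patterns that pair permutations on the two sides according to descent count. Since $\Av_n(\Pi_1)$ is the set of West-$2$-stack-sortable permutations, its cardinality equals $\tfrac{2(3n)!}{(n+1)!(2n+1)!}$, and matching this count on the right-hand side would be an initial sanity check.

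The next step is to seek compatible recursive decompositions of the two classes. For $\Av_n(\Pi_1)$, the natural decomposition is by the position of the largest entry: if $\sigma = \tau \cdot n \cdot \rho$, then known structural properties of West-$2$-stack-sortable permutations dictate exactly which pairs $(\tau,\rho)$ may appear, and indeed this is implicit in the recursive definition of $\Gamma$ given before the conjecture. I would seek an analogous decomposition of $\Av_n(\Pi_2)$, indexed either by the position of $1$ (since the value $1$ plays a distinguished role in $(3142,\{(2,2)\})$) or by the sequence of left-to-right maxima. The interior shading at $(2,2)$ in the second mesh pattern forbids a $3142$-occurrence whose middle positions and middle values bracket no witness letter, and the letter $n$ (or another left-to-right maximum) frequently plays the witness role; this asymmetry with $\Pi_1$, whose mesh box sits at the boundary above the second letter, means the bijection cannot be a trivial relabeling. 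Given compatible decompositions, I would define $\phi$ inductively via a formula like $\phi(\tau \cdot n \cdot \rho) = \phi(\tau) \cdot n \cdot \phi(\rho)$ after suitable relabeling, then verify descent preservation directly: the descents of $\sigma$ split into descents of $\tau$, descents of $\rho$, and at most one bridge descent at the position just after $n$, each of which is respected by the recursion.

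The main obstacle will be establishing the correct recursive structure on $\Av_n(\Pi_2)$. The classical components $2341$ and $2413$ lie in distinct Wilf classes of $\symm_4$, so the equivalence is not a superficial symmetry; it must arise from a nontrivial interplay between the classical pattern and the associated mesh pattern in each set. In particular, verifying that a proposed decomposition $\sigma = \tau \cdot x \cdot \rho$ is consistent with avoidance of both $2413$ and $(3142,\{(2,2)\})$ requires a careful case analysis of how occurrences of these patterns can straddle the pivot $x$, and the mesh constraint complicates matters further because the validity of a local rearrangement depends on global information about which values lie in certain intervals. If no clean bijection emerges, a fallback is to derive and match functional equations for $F_n^{\des}(\Pi_i;q)$ directly, for instance by a kernel-method or generating-tree argument keyed to left-to-right maxima, thereby settling the equivalence even without an explicit map.
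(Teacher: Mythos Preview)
The statement you are addressing is a \emph{conjecture} in the paper, not a theorem; the paper offers no proof whatsoever, only numerical evidence and the remark that if the equivalence holds then both polynomials coincide with the known formula of B\'ona for West-$2$-stack-sortable permutations. There is therefore nothing in the paper against which to compare your argument.

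More to the point, what you have written is not a proof but a research plan. You outline a sequence of steps you intend to carry out---computational checks, a search for compatible recursive decompositions, an inductive definition of a bijection, and a kernel-method fallback---but you do not actually execute any of them. No decomposition of $\Av_n(\Pi_2)$ is exhibited, no bijection $\phi$ is defined, no functional equation is derived or solved, and no verification of descent preservation is given. The passage even flags the central difficulty (``The main obstacle will be establishing the correct recursive structure on $\Av_n(\Pi_2)$'') without resolving it. As written, this is a reasonable sketch of how one might \emph{attempt} the problem, and the instincts (decompose around an extremal letter, track how the mesh box interacts with the pivot, fall back on generating-function identities) are sound, but it does not establish the $\des$-Wilf equivalence and so does not settle the conjecture.
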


If this conjecture is true, then from \cite{Bona02} it follows that
\begin{eqnarray*}
	F_n^{\des}\left(\left\{
	\textpattern{2,3,4,1}{},
	\textpattern{3,2,4,1}{1/4}
	\right\};q\right) &=& F_n^{\des}\left(\left\{
	\textpattern{2,4,1,3}{},
	\textpattern{3,1,4,2}{2/2}
	\right\};q\right) \\
		&=& \sum_{k=0}^{n-1} \frac{(n+k)!(2n-k-1)!}{(k+1)!(n - k)!(2k + 1)!(2n - 2k - 1)!}q^k.
\end{eqnarray*}

\begin{table}
\[
	\begin{array}{|c|c|c|}\hline \relax
		\Pi & n & F_n^{\des}(\Pi;q) \\ \hline \relax
		\multirow{2}{*}{$\left\{\textpattern{1,2,4,3}{1/0,1/1,1/2,1/3,1/4}\right\},\left\{\textpattern{3,4,1,2}{1/0,1/1,1/2,1/3,1/4}\right\}$} & 4 &  1+10q+ 11q^2 +q^3 \\ 
		 & 5 & 1+ 20q+ 57q^2 +26q^3 +q^4 \\
		 & 6 & 1+ 35q + 204q^2 + 252q^3 + 57q^4 + q^5 \\ 
		 & 7 & 1+ 56q+ 581q^2+ 1500q^3+ 969q^4+ 120q^5+ q^6 \\
		 & 8 & 1+84q+ 1414q^2 + 6588q^3 + 9117q^4 + 3426q^5+ 247q^6+ q^7 \\ \hline \relax
		 \multirow{2}{*}{$\left\{\textpattern{1,3,4,2}{1/0,1/1,1/2,1/3,1/4}\right\},\left\{\textpattern{2,4,1,3}{1/0,1/1,1/2,1/3,1/4}\right\}$} & 4 & 1 + 10q+ 11q^2 + q^3 \\
		 & 5 & 1+ 20q+ 56q^2+ 26q^3+ q^4 \\
		 & 6 & 1+ 35q+ 196q^2+ 241q^3+ 57q^4+ q^5\\ 
		 & 7 & 1+ 56q+ 546q^2+ 1361q^3+ 897q^4+ 120q^5 +q^6 \\ 
		 & 8 & 1+ 84q+ 1302q^2+ 5675q^3+ 7739q^4+ 3060q^5+ 247q^6+ q^7 \\ \hline
	\end{array}
\]
\caption{The polynomials $F_n^{\des}(\Pi;q)$ for certain sets of patterns $\Pi$.}\label{tab:data}
\end{table}

Instead of generalizing the patterns being avoided, one may generalize permutations themselves.
One way to do this is to consider the \emph{colored permutations} 
\[
	G_{r,n} := \{(\eps,\sigma) \mid \eps \in \ZZ_r,\, \sigma \in \symm_n\}.
\]
In this case, we say that $(\eps,\sigma) \in G_{r,n}$ \emph{contains} $(\zeta, \pi) \in G_{s,m}$ if there are elements
$1 \leq i_1 < i_2 < \dots < i_s \leq n$ such that $\std(\sigma_{i_1}\dots\sigma_{i_s}) = \pi$ and $\eps_{i_j} = \zeta_j$ for all $j$.
If no such choice of $i_j$ exist, then we say $(\eps,\sigma)$ \emph{avoids} $(\zeta,\pi)$.
For a set of colored permutations $\Pi$, let
\[
	\Av_{r,n}(\Pi) = \{(\eps,\sigma) \in G_{r,n} \mid (\eps,\sigma) \text{ avoids all } (\zeta,\pi) \in \Pi\}.
\]
\begin{question}
	What can be said about the polynomials
	\[
		F_{r,n}^{\st}(\Pi;q) = \sum_{(\eps,\sigma) \in \Av_{r,n}(\Pi)} q^{\st (\eps,\sigma)}?
	\]
\end{question}

We close by noting that $G_{r,n}$ is the set of elements in the wreath product $\ZZ_r \wr \symm_n$, a fact which may be useful when addressing the above questions.

\bibliographystyle{plain}
\bibliography{references}

\end{document}